\numberwithin{equation}{section}
\newtheorem{theorem}{Theorem}[section]
\newtheorem{corollary}{Corollary}[section]
\newtheorem{lemma}{Lemma}[section]
\theoremstyle{definition}
\newtheorem{definition}{Definition}[section]
\theoremstyle{remark}
\newcommand{\Rnum}[1]{\uppercase\expandafter{\romannumeral #1\relax}}
\newcommand{\rnum}[1]{\romannumeral #1\relax}
\newcommand{\mr}[1]{\mathrm{#1}}
\newcommand{\mb}[1]{\mathbb{#1}}
\newcommand{\mc}[1]{\mathcal{#1}}
\DeclareMathOperator{\Av}{Av} 
\def\clap#1{\hbox to 0pt{\hss#1\hss}}
\title{The Bellman function of the dyadic maximal operator in connection with the Dyadic Carleson Imbedding Theorem}
\author{Eleftherios N. Nikolidakis}
\begin{document}
\maketitle

\begin{abstract}
We provide an alternative proof and expression of the Bellman function of the dyadic maximal operator in connection with the Dyadic Carleson Imbedding Theorem, which appears in \cite{10}. We also evaluate the Bellman function of four variables of the dyadic maximal operator
\end{abstract}

\section{Introduction} \label{sec:1}
The dyadic maximal operator on $\mb R^n$ is a useful tool in analysis and is defined by
\begin{equation} \label{eq:1p1}
\mc M_d\phi(x) = \sup\left\{ \frac{1}{|S|} \int_S |\phi(u)|\,\mr du: x\in S,\ S\subseteq \mb R^n\ \text{is a dyadic cube} \right\},
\end{equation}
for every $\phi\in L^1_\text{loc}(\mb R^n)$, where $|\cdot|$ denotes the Lebesgue measure on $\mb R^n$, and the dyadic cubes are those formed by the grids $2^{-N}\mb Z^n$, for $N=0, 1, 2, \ldots$.\\
It is well known that it satisfies the following weak type (1,1) inequality
\begin{equation} \label{eq:1p2}
\left|\left\{ x\in\mb R^n: \mc M_d\phi(x) > \lambda \right\}\right| \leq \frac{1}{\lambda} \int_{\left\{\mc M_d\phi > \lambda\right\}} |\phi(u)|\,\mr du,
\end{equation}
for every $\phi\in L^1(\mb R^n)$, and every $\lambda>0$,
from which it is easy to get the following  $L^p$-inequality
\begin{equation} \label{eq:1p3}
\|\mc M_d\phi\|_p \leq \frac{p}{p-1} \|\phi\|_p,
\end{equation}
for every $p>1$, and every $\phi\in L^p(\mb R^n)$.
It is easy to see that the weak type inequality \eqref{eq:1p2} is the best possible. 

It has also been proved that \eqref{eq:1p3} is best possible (see \cite{2} and \cite{3} for general martingales and \cite{15} for dyadic ones).
An approach for studying the behaviour of this maximal operator in more depth is the introduction of the so-called Bellman functions which play the role of generalized norms of $\mc M_d$. Such functions related to the $L^p$-inequality \eqref{eq:1p3} have been precisely identified in \cite{8}, \cite{10} and \cite{14}. For the description of the Bellman functions of $\mc M_d$, we use the notation $\Av_E(\psi)=\frac{1}{|E|} \int_E \psi$, whenever $E$ is a Lebesgue measurable subset of $\mb R^n$ of positive measure and $\psi$ is a real valued measurable function defined on $E$. We fix a dyadic cube  $Q$ and define the localized maximal operator $\mc M'_d\phi$ as in \eqref{eq:1p1} but with the dyadic cubes $S$ being assumed to be contained in $Q$. Then for every $p>1$ we let
\begin{equation} \label{eq:1p4}
B_p(f,F)=\sup\left\{ \frac{1}{|Q|} \int_Q (\mc M'_d\phi)^p: \Av_Q(\phi)=f,\ \Av_Q(\phi^p)=F \right\},
\end{equation}
where $\phi$ is nonnegative in $L^p(Q)$ and the variables $f, F$ satisfying $0<f^p\leq F$. By a scaling argument it is easy to see that \eqref{eq:1p4} is independent of the choice of $Q$ (so we may choose
$Q$ to be the unit cube $[0,1]^n$).
In \cite{10}, the function \eqref{eq:1p4} has been precisely identified for the first time. The proof has been given in a much more general setting of tree-like structures on probability spaces.

More precisely we consider a non-atomic probability space $(X,\mu)$ and let $\mc T$ be a family of measurable subsets of $X$, that has a tree-like structure similar to the one in the dyadic case (the exact definition will be given in Section \ref{sec:2}).
Then we define the dyadic maximal operator associated to $\mc T$, by
\begin{equation} \label{eq:1p5}
\mc M_{\mc T}\phi(x) = \sup \left\{ \frac{1}{\mu(I)} \int_I |\phi|\,\mr \; d\mu: x\in I\in \mc T \right\},
\end{equation}
for every $\phi\in L^1(X,\mu)$, $x\in X$.

This operator is related to the theory of martingales and satisfies essentially the same inequalities as $\mc M_d$ does. Now we define the corresponding Bellman function of four variables of $\mc M_{\mc T}$, by
\begin{multline} \label{eq:1p6}
B_p^{\mc T}(f,F,L,k) = \sup \left\{ \int_K \left[ \max(\mc M_{\mc T}\phi, L)\right]^p\mr \; d\mu: \phi\geq 0, \int_X\phi\,\mr \; d\mu=f, \right. \\  \left. \int_X\phi^p\,\mr \; d\mu = F,\ K\subseteq X\ \text{measurable with}\ \mu(K)=k\right\},
\end{multline}
the variables $f, F, L, k$ satisfying $0<f^p\leq F $, $L\geq f$, $k\in (0,1]$.
The exact evaluation of \eqref{eq:1p6} is given in \cite{10}, for the cases where $k=1$ or $L=f$. In the first case the author (in \cite{10}) precisely identifies the function $B_p^{\mc T}(f,F,L,1)$ by evaluating it in a first stage for the case where $L=f$. That is he precisely identifies $B_p^{\mc T}(f,F,f,1)$ (in fact $B_p^{\mc T}(f,F,f,1)=F \omega_p (\frac{f^p}{F})^p$, where                         $\omega_p: [0,1] \to [1,\frac{p}{p-1}]$ is the inverse function $H^{-1}_p$, of $H_p(z) = -(p-1)z^p + pz^{p-1}$). Then using several calculus argument he provides the evaluation of $B_p^{\mc T}(f,F,L,1)$ for every $L\geq f$. Now in \cite{14} the authors give a direct proof of the evaluation of $B_p^{\mc T}(f,F,L,1)$ by using alternative methods. In fact they prove a sharp symmetrization principle that holds for the dyadic maximal operator, which is stated as Theorem 2.1 (see Section \ref{sec:2}).

In the second case, where $L=f$, the author (in \cite{10}) uses the evaluation of $B_p^{\mc T}(f,F,f,1)$ and provides a proof of the more general $B_p^{\mc T}(f,F,f,k)$, $k\in (0,1]$. We write from now on this function as $B_p^{\mc T}(f,F,k)$. This function is related to the Dyadic Carleson Imbedding Theorem and in fact, as is proved in \cite{10}, the following is true
\begin{multline} \label{eq:1p7}
B_p^{\mc T}(f,F,k) = \sup\left\{ \sum_{I\in \mc T} \lambda_I(Av_I(\phi))^p,\phi \geq 0,\ \int_X \phi\,\mr \; d\mu = f,\ \int_X \phi^p\,\mr \; d\mu = F,  \right. \\  \left. \text{and the nonegative }\ \lambda_I \text{'s satisfy} \sum_{J\in \mathcal{T} : J \subseteq I}\lambda_J \leq \mu (I) \text{ for every } I\in \mathcal{T}  \right. \\  \left. \text{ and } \sum_{I\in \mathcal{T}}\lambda_I=k \right\}.
\end{multline}

As an immediate step for the evaluation of $B_p^{\mc T}(f,F,k)$ in \cite{10}, it is provided an alternative expression for this function. This is stated in the following theorem

\begin{theorem}\label{thm:1p1}
The following is true
\begin{multline}\label{eq:1p8}
B_p^{\mc T}(f,F,k)= \sup \left\lbrace \left( F-\frac{(f-B)^p}{(1-k)^{p-1}} \right) \omega_p \left( \frac{B^p}{k^{p-1}\left( F-\frac{(f-B)^p}{(1-k)^{p-1}} \right)}\right) \ \ :\right. \\  \left. \text{for all } B\in [0,f] \text{ such that }  h_k(B) \leq F \right\rbrace,
\end{multline}
where $h_k$ is defined by $h_k(B)=\frac{(f-B)^p}{(1-k)^{p-1}}+\frac{B^p}{k^{p-1}}$.
\end{theorem}

After proving the above theorem, the author in \cite{10}, precisely evaluated $B_p^{\mc T}(f,F,k)$, by using a chain of calculus arguments.
In Section \ref{sec:3} we provide an alternative proof of Theorem 1.1. Now in view of the symmetrization principle that appears in \cite{14} (see Theorem 2.1 below) we conclude that

\begin{multline}\label{eq:1p9}
B_p^{\mc T}(f,F,k)= \sup \left\lbrace \int_0^k \left( \frac{1}{t} \int_0^t g\right)^p dt \ \ : \text{ where } g:(0,1]\longrightarrow \mathbb{R}^{+} \text{ is} \right. \\  \left. \text{ non-increasing, } \int_0^1 g=f,\; \int_0^1 g^p =F \right\rbrace.
\end{multline}

In Section \ref{sec:4} we prove the following

\begin{theorem}
There exists a function $g=g_k:(0,1]\longrightarrow \mathbb{R}^{+}$ non-increasing and continuous, satisfying $\int_0^1 g=f$ and $\int_0^1 g^p =F$, for which the supremum in \eqref{eq:1p9} is attained.
\end{theorem}
Moreover we explicitly construct the function $g_k$, mentioned above.
At last in Section 5 we give a proof of the evaluation of the Bellman function of four variables (1.6) for the dyadic maximal operator in terms of $B_p^{\mc T}(f,F,k)$.
 
We note also that further study of the dyadic maximal operator can be seen in \cite{14} where a sharp symmetrization principle for this operator is presented. There are several problems in Harmonic Analysis where Bellman functions naturally arise. Such problems (including the dyadic Carleson Imbedding Theorem and weighted inequalities) are described in \cite{12,13} and others. We should mention also that the exact computation of a Bellman function is a difficult task which is connected with the deeper structure of the corresponding Harmonic Analysis problem. Thus far several Bellman functions have been computed (see \cite{2,9,11}). For more recent developments we refer to \cite{1,6,7}, while for the study of general theory of maximal operators one can consult \cite{4,5}.

In this paper, as in our previous ones we use Bellman functions as a mean to get in deeper understanding of the corresponding maximal operators and we are not using the standard techniques as Bellman dynamics and induction, corresponding PDE's, obstacle conditions etc. Instead our methods being different from the Bellman function technique, we rely on the combinational structure of these operators. For such approaches, which enables us to study and solve problems as the one which is described in this article one can see \cite{8,9,10,11,14}.\\

\section{Preliminaries} \label{sec:2}
Let $(X,\mu)$ be a nonatomic probability space. We give the following

\begin{definition} \label{def:2p1}
A set $\mc T$ of measurable subsets of $X$ will be called a tree if the following conditions are satisfied:
\begin{enumerate}[i)]
\item $X\in\mc T$ and for every $I\in\mc T$ we have that $\mu(I) > 0$.
\item For every $I\in\mc T$ there corresponds a finite or countable subset $C(I) \subseteq \mc T$ containing at least two elements such that
\vspace{-5pt}
\begin{enumerate}[a)]
\item the elements of $C(I)$ are pairwise disjoint subsets of $I$.
\item $I = \cup\, C(I)$.
\end{enumerate}
\item $\mc T = \cup_{\substack{ m\geq 0}} \mc T_{(m)}$, where $\mc T_{(0)} = \left\{ X \right\}$ and $\mc T_{(m+1)} = \cup_{I\in \mc T_{(m)}} C(I)$.
\item We have $\lim_{m\to\infty} \left( \sup_{I\in \mc T_{(m)}} \mu(I) \right)= 0 $
\item The tree $\mc T$ differentiates $L^1(X,\mu)$. That is for every $\phi \in L^1(X,\mu)$ it is true that \[ \lim_{\substack{x\in I \in \mc T \\ \mu (I)\to 0}} \frac{1}{\mu (I)}\int_I \phi \; d\mu = \phi(x),\] for $\mu-$almost every $x\in X$.
\end{enumerate}
\end{definition}

Then we define the dyadic maximal operator corresponding to $\mc T$ by
\begin{equation}\label{eq:2p1}
M_{\mc T}\phi (x)=sup \left\lbrace \frac{1}{\mu(I)} \int_I \mid \phi \mid \; d\mu \; : x\in I\in \mc T  \right\rbrace,
\end{equation}
for every $\phi \in L^1(X,\mu)$, $x\in X$.

We give the following which appears in \cite{10}.

\begin{lemma} \label{lem:2p1}
For every $I\in \mc T$ and every $\alpha$ such that $0<a<1$ there exists a subfamily $\mc F(I) \subseteq \mc T$ consisting of pairwise disjoint subsets of $I$ such that
\[
\mu\!\left( \underset{J\in\mc F(I)}{\bigcup} J \right) = \sum_{J\in\mc F(I)} \mu(J) = (1-\alpha)\mu(I).
\]
\end{lemma}

\begin{definition} \label{def:2p2}
Let $\phi: (X,\mu)\longrightarrow \mathbb{R}^{+}$. Then $\phi^{*}:(0,1]\longrightarrow \mathbb{R}^{+}$ is defined as the unique non-increasing, left continuous and equimeasurable to $\phi$ function on $(0,1]$.
\end{definition}

There are several formulas that express $\phi^{*}$, in terms of $\phi$.
One of them is as follows:
\[
\phi^{*}(t)=\inf \left( \left\lbrace y>0: \mu \left( \left\lbrace x\in X: \phi(x)>y\right\rbrace\right)<t \right\rbrace \right),
\]
for every $t\in (0,1]$. An equivalent formulation of the non increasing rearrangement can be given by
\[
\phi^{*}(t)=\sup_{e\subseteq X, \\  \mu(e)\geq t} \left[ \inf_{x\in e} \phi (x) \right],
\]
for any $t\in (0,1]$.

In \cite{20} one can see the following symmetrization principle for the dyadic maximal operator $M_{\mc T}$.

\begin{theorem}\label{thm:2p1}
Let $g:(0,1]\longrightarrow \mathbb{R}^{+}$ be non-increasing and $G_1,G_2$ be non-decreasing and non-negative functions defined on $[0,+\infty)$. Then the following is true, for any $k\in (0,1]$
\begin{multline*}
\sup \left\lbrace \int_K G_1(M_{\mc T}\phi)\,G_2(\phi)\; d\mu: \phi^{*}=g \text{ and } \mu(K)=k  \right\rbrace=\\
= \int_0^k G_1\left( \frac{1}{t}\int_0^t g \right) G_2\left(g(t)\right) \; dt.
\end{multline*}
\end{theorem}

We also state the following, which is a standard fact in the theory of real functions.

\begin{lemma} \label{lem:2p2}
Let $g_1, g_2:(0,1]\longrightarrow \mathbb{R}^{+}$ be non-increasing functions, such that
\[
\int_0^1 G\left( g_1(t) \right)\; dt\leq \int_0^1 G\left( g_2(t) \right)\; dt
\] for every $G:[0,+\infty)\longrightarrow [0,+\infty)$ non-decreasing. Then the inequality $g_1(t) \leq g_2(t)$ holds almost everywhere on $(0,1]$
\end{lemma}

Fix now $k\in(0,1)$, $p>1$ and consider the function
\begin{equation} \label{eq:2p2}
h_k(B) = \frac{(f-B)^p}{(1-k)^{p-1}} + \frac{B^p}{k^{p-1}},
\end{equation}
defined for $B\in [0,f]$.

We also define
\begin{equation} \label{eq:2p3}
\mathcal R_k(B) = \left(F-\frac{(f-B)^p}{(1-k)^{p-1}}\right)\omega_p\!\left(\frac{B^p}{k^{p-1}\left(F-\frac{(f-B)^p}{(1-k)^{p-1}}\right)}\right)^p,
\end{equation}
for $B$ such that $B\in [0,f]$ and $h_k(B)\leq F$.
Note that $\mathcal R_k(B)$ is defined for all $B\in [0,f]$ for which $h_k(B) \leq F$ or equivalently:
\[
\frac{(f-B)^p}{(1-k)^{p-1}} + \frac{B^p}{k^{p-1}} \leq F \iff 0\leq \frac{B^p}{k^{p-1}\left[F-\frac{(f-B)^p}{(1-k)^{p-1}}\right]} \leq 1
\]
so that \eqref{eq:2p3} makes sense in view of the definition of $\omega_p$.

Then as is mentioned in \cite{10} the domain of definition of $\mathcal R_k$ is an interval $[p_0(f,F,k),\, p_1(f,F,k)]\subseteq [0,f]$. Moreover the following technical lemma is proved in \cite{10}. It describes the properties of the unique interior point $B_0$ of $[p_0(f,F,k),\, p_1(f,F,k)]$ at which the function $\mathcal R_k$ attains its maximum in its domain of definition.

\begin{lemma} \label{lem:2p3}
The function $\mathcal R_k$ defined on $\left[p_0(f,F,k),\, p_1(f,F,k)\right]$ attains its absolute maximum at a unique interior point $\displaystyle B_0\in\left(kf, \min\left(\frac{pk}{p-1+k},p_1(f,F,k)\right)\right)$.
Moreover $B_0$ satisfies
\begin{equation*}
H_p\left(\frac{B_0}{k}\frac{1-k}{f-B_0}\right)=\frac{B_0^p}{k^{p-1}\left[F-\frac{(f-B_0)^p}{(1-k)^{p-1}}\right]}
\end{equation*}

\end{lemma}

\section{The Bellman function $B_p(f,F,k)$}  \label{sec:3}

\begin{lemma} \label{lem:3p1}
For any $\phi : (X,\mu) \to \mb R^+$ integrable, the following inequality is true
\[
(M_{\mc T}\phi)^\star (t) \leq \frac{1}{t} \int_0^t \phi^\star(u)\,\mr du,\ \text{for every}\ t\in (0,1].
\]
\end{lemma}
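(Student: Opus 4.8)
The plan is to prove the pointwise-in-$t$ rearrangement inequality
\[
(\mt\phi)^\star(t) \leq \frac{1}{t}\int_0^t \phi^\star(u)\,\mr du
\]
by exploiting Corollary \ref{cor:2p1} together with Lemma \ref{lem:2p3}, rather than attempting a direct argument about level sets of $\mt\phi$. The strategy is to recognize that both sides of the desired inequality are non-increasing functions of $t\in(0,1]$, and that Lemma \ref{lem:2p3} reduces a pointwise (a.e.) inequality between two non-increasing functions to a family of integral inequalities tested against all non-decreasing $G\colon[0,+\infty)\to[0,+\infty)$.

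First I would observe that $(\mt\phi)^\star$ is non-increasing by definition, and that the function $t\mapsto \frac{1}{t}\int_0^t\phi^\star(u)\,\mr du$ is non-increasing as well: this is the standard fact that the average of a non-increasing function over $(0,t]$ decreases in $t$, which follows since $\phi^\star$ is non-increasing. Thus both candidates for $g_1$ and $g_2$ in Lemma \ref{lem:2p3} are legitimately non-increasing and non-negative. Next I would set $g_1 = (\mt\phi)^\star$ and $g_2(t) = \frac{1}{t}\int_0^t\phi^\star(u)\,\mr du$, and verify the hypothesis of Lemma \ref{lem:2p3}: namely that
\[
\int_0^1 G\bigl(g_1(t)\bigr)\,\mr dt \leq \int_0^1 G\bigl(g_2(t)\bigr)\,\mr dt
\]
for every non-decreasing $G$.

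The key step is that the left-hand side equals $\int_X G(\mt\phi)\,\mr d\mu$, by the equimeasurability of $\mt\phi$ with its rearrangement $(\mt\phi)^\star$ (here one uses that $G$ is, say, Borel measurable and non-negative, so the layer-cake representation makes $\int_X G(\mt\phi)\,\mr d\mu = \int_0^1 G\bigl((\mt\phi)^\star(t)\bigr)\,\mr dt$). Once this identity is in place, Corollary \ref{cor:2p1} gives exactly
\[
\int_X G(\mt\phi)\,\mr d\mu \leq \int_0^1 G\!\left(\frac{1}{t}\int_0^t\phi^\star(u)\,\mr du\right)\mr dt = \int_0^1 G\bigl(g_2(t)\bigr)\,\mr dt,
\]
which is the required integral comparison. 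Applying Lemma \ref{lem:2p3} then yields $g_1(t)\leq g_2(t)$ almost everywhere, and since both functions are non-increasing (hence have one-sided limits everywhere), the inequality in fact holds at every $t\in(0,1]$ after adjusting for left-continuity.

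The main obstacle I anticipate is the careful justification of the equimeasurability identity $\int_X G(\mt\phi)\,\mr d\mu = \int_0^1 G((\mt\phi)^\star)\,\mr dt$ when $G$ ranges over \emph{all} non-decreasing non-negative functions, not merely continuous ones; this requires knowing that $\mt\phi$ and $(\mt\phi)^\star$ are equimeasurable (which is guaranteed by Definition \ref{def:2p2}) and a measure-theoretic approximation to push the identity past continuity assumptions. A secondary subtlety is upgrading the ``almost everywhere'' conclusion of Lemma \ref{lem:2p3} to an everywhere statement on $(0,1]$; this is harmless because a non-increasing function is determined by its values off any null set at points of left-continuity, and both sides enjoy enough regularity (left-continuity of $(\mt\phi)^\star$ and continuity of the averaging integral) for the inequality to propagate to all $t$.
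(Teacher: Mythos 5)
Your proof is correct and takes essentially the same route as the paper's first proof of Lemma \ref{lem:3p1}: both apply Corollary \ref{cor:2p1}, use equimeasurability of $\mt \phi$ and $(\mt \phi)^\star$ to identify $\int_0^1 G\bigl((\mt \phi)^\star(t)\bigr)\,\mr dt$ with $\int_X G(\mt \phi)\,\mr d\mu$, invoke Lemma \ref{lem:2p3} to obtain the inequality almost everywhere, and then upgrade to every $t\in(0,1]$ by left-continuity of the two non-increasing functions. (The paper additionally records a second, more elementary proof via the weak-type inequality \eqref{eq:1p2}, but your argument matches the first one.)
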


\begin{proof}
By Theorem \ref{thm:2p1} we have for any $G:[0,+\infty)\longrightarrow [0,+\infty)$ non-decreasing
\begin{equation} \label{eq:3p1}
\int_X G(M_{\mc T}\phi)\,\mr d\mu \leq \int_0^1 G\!\left(\frac{1}{t}\int_0^t \phi^\star(u)\,\mr du\right)\mr dt.
\end{equation}
Since $G$ is non-decreasing we have that
\[
\left[G(M_{\mc T}\phi)\right]^\star(t) = G\left[(M_{\mc T}\phi)^\star\right](t),\ \text{for almost every}\ t\in(0,1].
\]
Thus $\int_0^1 G\left[(M_{\mc T}\phi)^\star\right](t)\, dt = \int_0^1\left[G(M_{\mc T}\phi)\right]^\star(t)\,\mr dt =
\int_X G(M_{\mc T}\phi)\, d\mu \leq \\  \int_0^1 G\!\left(\frac{1}{t}\int_0^t\phi^\star(u)\, du\right) dt$, by \eqref{eq:3p1}.
Thus by Lemma \ref{lem:2p2} we immediately conclude that
\begin{equation} \label{eq:3p2}
(M_{\mc T}\phi)^\star(t) \leq \frac{1}{t} \int_0^t \phi^\star(u)\,\mr du,
\end{equation}
almost everywhere on $(0,1]$. Since now $(M_{\mc T}\phi)^\star$ is left continuous, we conclude that \eqref{eq:3p2} should hold everywhere on $(0,1]$, and in this way we derive the proof of our Lemma.
\end{proof}

We also present a simpler proof of Lemma \ref{lem:3p1}.

\begin{proof}[2nd proof of Lemma \ref{lem:3p1}] ~ \\
Suppose that we are given $\phi : (X,\mu)\to \mb R^+$ integrable and $t\in (0,1]$ fixed. We set $A = \frac{1}{t}\int_0^t \phi^\star(u)\,\mr du$. Then obviously $A \geq \int_0^1\phi^\star(u)\,\mr du = f$, by the fact that $\phi^\star$ is non-increasing on $(0,1]$. We consider the set $E = \{M_{\mc T}\phi > A\} \subseteq X$. \\
Then by the weak type inequality \eqref{eq:1p2} for $M_{\mc T} \phi$, we have that
\begin{multline} \label{eq:3p3}
\mu(E) < \frac{1}{A} \int_E |\phi|\,\mr d\mu \Rightarrow  \\
A = \frac{1}{t} \int_0^t \phi^\star(u)\,\mr d\mu < \frac{1}{\mu(E)} \int_E \phi\,\mr d\mu \leq \frac{1}{\mu(E)} \int_0^{\mu(E)} \phi^\star(u)\,\mr du,
\end{multline}
where the last inequality in \eqref{eq:3p3} holds due to the definition of $\phi^\star$. Since $\phi^\star$ is non-increasing we must have from \eqref{eq:3p3}, that $\mu(E)<t$. But $\mu(E) = \left|\left\{(M_{\mc T}\phi)^\star(t) > A \right\}\right|$ since $(M_{\mc T}\phi)$ and $(M_{\mc T}\phi)^\star$ are equimeasurable.
Since $(M_{\mc T}\phi)^\star$ is non-increasing and because of the fact that $\mu(E) < t$ we conclude that $\{(M_{\mc T}\phi)^\star > A\} = (0,\gamma)$ for some $\gamma<t$. Thus $t\notin \{(M_{\mc T}\phi)^\star > A \} \Rightarrow (M_{\mc T}\phi)^\star(t) \leq A = \frac{1}{t}\int_0^t\phi^\star(u)\,\mr du$,
which is the desired result.
\end{proof}
We are now in position to state and prove

\begin{lemma} \label{lem:3p2}
Let $\phi : (X,\mu) \to \mb R^+$ be such that $\int_X \phi \,\mr d\mu = f$ and $\int_X \phi^p\,\mr d\mu = F$ where $0<f^p\leq F$. Suppose also that we are given a measurable subset $K$ of $X$ such that $\mu(K)=k$, where $k$ is fixed such that $k\in (0,1]$. Then the following inequality is true:
\[
\int_K (M_{\mc T}\phi)^p\,\mr d\mu \leq \int_0^k [\phi^\star(u)]^p\,\mr du \cdot \omega_p\!\!\left(\frac{\left(\int_0^k\phi^\star(u)\,\mr du\right)^p}{k^{p-1}\int_0^k[\phi^\star(u)]^p\,\mr du}\right)^p.
\]
\end{lemma}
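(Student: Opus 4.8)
The plan is to transfer the estimate from the set $K$ to the interval $(0,k]$ by rearrangement, to bound $\mt\phi$ pointwise by the averages of $\phi^\star$ via Lemma \ref{lem:3p1}, and finally to recognize the resulting one–dimensional integral as a rescaled instance of the sharp bound already carried by the two–variable Bellman function \eqref{eq:1p6}.

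First I would use the Hardy--Littlewood inequality (see \cite{3}): for any non-negative measurable $g$ on $X$ and any measurable $K$ with $\mu(K)=k$ one has $\int_K g\,\mr d\mu\le\int_0^k g^\star(t)\,\mr dt$, obtained by pairing $g$ with the indicator $\mathbf 1_K$, whose rearrangement is $\mathbf 1_{(0,k]}$. Applying this to $g=(\mt\phi)^p$ and using that $\big[(\mt\phi)^p\big]^\star=\big[(\mt\phi)^\star\big]^p$, since $t\mapsto t^p$ is increasing, gives
\[
\int_K(\mt\phi)^p\,\mr d\mu\le\int_0^k\big[(\mt\phi)^\star(t)\big]^p\,\mr dt .
\]
Lemma \ref{lem:3p1} then replaces $(\mt\phi)^\star$ by the averages of $\phi^\star$, and after writing $g=\phi^\star$ the whole statement reduces to the purely one–dimensional inequality
\[
\int_0^k\Big(\frac1t\int_0^t g(u)\,\mr du\Big)^p\,\mr dt\le\Big(\int_0^k g^p\Big)\,\omega_p\!\Big(\frac{\big(\int_0^k g\big)^p}{k^{p-1}\int_0^k g^p}\Big)^{p},
\]
valid for every non-increasing $g:(0,1]\to\mb R^+$. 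Here the argument of $\omega_p$ lies in $[0,1]$ by Jensen's inequality $\big(\frac1k\int_0^k g\big)^p\le\frac1k\int_0^k g^p$, so that the right–hand side is well defined.

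I expect this last inequality to be the heart of the matter, and I would establish it by reducing to the unit interval. Setting $h(s)=g(ks)$ for $s\in(0,1]$ produces a non-increasing $h$ with $\int_0^1 h=\frac1k\int_0^k g$ and $\int_0^1 h^p=\frac1k\int_0^k g^p$, and a change of variables turns $\int_0^k\big(\frac1t\int_0^t g\big)^p\,\mr dt$ into $k\int_0^1\big(\frac1s\int_0^s h\big)^p\,\mr ds$. It therefore suffices to treat the case $k=1$ for $h$. For that, Theorem \ref{thm:2p1} with $G_1(x)=x^p$ and $G_2\equiv 1$ gives $\int_0^1\big(\frac1s\int_0^s h\big)^p\,\mr ds=\sup\{\int_X(\mt\psi)^p\,\mr d\mu:\psi^\star=h\}$; since every such $\psi$ satisfies $\int_X\psi\,\mr d\mu=\int_0^1 h$ and $\int_X\psi^p\,\mr d\mu=\int_0^1 h^p$, this supremum is at most $B_p^{\mc T}\big(\int_0^1 h,\int_0^1 h^p\big)=\big(\int_0^1 h^p\big)\,\omega_p\big((\int_0^1 h)^p/\int_0^1 h^p\big)^p$, the evaluation of \eqref{eq:1p6} from \cite{4}. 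Undoing the rescaling and simplifying the argument of $\omega_p$ through $\big(\frac1k\int_0^k g\big)^p\big/\big(\frac1k\int_0^k g^p\big)=\big(\int_0^k g\big)^p\big/\big(k^{p-1}\int_0^k g^p\big)$ recovers exactly the claimed bound. Thus the only genuine difficulty is the one–variable inequality, which the rescaling shows to be no more than the known sharp Bellman estimate transported to $(0,k]$; the case $g\equiv 0$ is trivial, while otherwise $\int_0^1 h>0$ together with $(\int_0^1 h)^p\le\int_0^1 h^p$ guarantees that the Bellman function is being evaluated on its natural domain.
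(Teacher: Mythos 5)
Your proof is correct, but after the common opening steps it takes a genuinely different route from the paper's. Both arguments begin identically: the Hardy--Littlewood inequality gives $\int_K(\mt\phi)^p\,\mr d\mu\le\int_0^k\bigl[(\mt\phi)^\star\bigr]^p(t)\,\mr dt$, and Lemma \ref{lem:3p1} reduces everything to bounding $J(k)=\int_0^k\bigl(\frac1t\int_0^t\phi^\star\bigr)^p\mr dt$ (the paper's citation of ``Corollary \ref{cor:3p2}'' at that step is a mislabel for Lemma \ref{lem:3p1}, which you used correctly). For the core one-dimensional inequality the paper then works directly: it computes $J(k)$ by a layer-cake/Fubini argument, arriving at the exact identity \eqref{eq:3p5}, applies H\"older to the second term, and solves the resulting scalar inequality for $\Lambda(k)=[J(k)/A(k)]^{1/p}$ via the definition of $\omega_p$ as the inverse of $H_p$. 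You instead rescale $(0,k]$ to $(0,1]$ through $h(s)=\phi^\star(ks)$ and quote two black boxes: the equality of Theorem \ref{thm:2p1} with $G_1(x)=x^p$, $G_2\equiv1$, identifying $\int_0^1\bigl(\frac1s\int_0^s h\bigr)^p\mr ds$ with $\sup\bigl\{\int_X(\mt\psi)^p\,\mr d\mu:\psi^\star=h\bigr\}$, and the evaluated two-variable Bellman function $B_p^{\mc T}(f,F)=F\,\omega_p(f^p/F)^p$ from \cite{4}. All your steps check out: the scaling identities are right, $G_2\equiv1$ is admissible, the class $\{\psi:\psi^\star=h\}$ is nonempty since $(X,\mu)$ is nonatomic, and Jensen places the argument of $\omega_p$ in $[0,1]$ (your degenerate case $g\equiv0$ in fact cannot occur here, since $f>0$ and the monotonicity of $\phi^\star$ force $\int_0^k\phi^\star>0$). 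The trade-off: your route is shorter and computation-free, but it imports the full strength of Melas's evaluation of \eqref{eq:1p6} together with the hard (attainment) direction of Theorem \ref{thm:2p1}, whereas the paper's proof is self-contained at this point, needing only the easy inequality of Corollary \ref{cor:2p1} through Lemma \ref{lem:3p1}; this matters because the paper's announced aim is to bypass the calculus machinery of \cite{4} where possible. Your use of \eqref{eq:1p6} is not actually circular within the paper, which itself invokes the value of \eqref{eq:1p6} in the sharpness construction after Corollary \ref{cor:3p2}. A pleasant by-product of your scaling step is the observation that the general-$k$ inequality is equivalent to its $k=1$ case, which is implicit but never isolated in the paper's treatment.
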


\begin{proof}
We obviously have that
\begin{equation} \label{eq:3p4}
\int_K(M_{\mc T} \phi)^p\, d\mu \leq \int_0^k [(M_{\mc T} \phi)^\star]^p(t)\, dt.
\end{equation}
We evaluate the right-hand side of \eqref{eq:3p4}. We have:
\begin{equation}  \label{eq:3p5}
\int_0^k \left[ (M_{\mc T}\phi)^\star)\right]^p dt \leq \int_0^k \left(\frac{1}{t} \int_0^t \phi^\star(u)\,\mr du\right)^p\mr dt
\end{equation}
by using Lemma \ref{cor:3p1}. Additionally
\begin{multline}  \label{eq:3p6}
\int_0^k \left(\frac{1}{t} \int_0^t \phi^\star(u)\,\mr du\right)^p\mr dt =
\int_{\lambda=0}^{+\infty} p\lambda^{p-1} \left|\left\{t\in(0,k] : \frac{1}{t}\int_0^t\phi^\star \geq \lambda\right\}\right|\mr d\lambda = \\
\int_{\lambda=0}^{f_k} + \int_{\lambda=f_k}^{+\infty} p\lambda^{p-1}\left|\left\{t\in (0,k] : \frac{1}{t}\int_0^t\phi^\star\geq \lambda\right\}\right|\mr d\lambda,
\end{multline}
where the first equation is justified by a use of Fubini's theorem and $f_k$ is defined by $f_k = \frac{1}{k} \int_0^k \phi^\star(u)\,\mr du > f = \int_0^1 \phi^\star(u)\,\mr du$. \\
The first integral in \eqref{eq:3p6} is obviously equal to $k(f_k)^p = \frac{1}{k^{p-1}}\bigl(\int_0^k\phi^\star\bigr)^p$. We suppose now that $\lambda>f_k$ is fixed. Then there exists $\alpha(\lambda)\in (0,k]$ such that $\frac{1}{\alpha(\lambda)}\int_0^{\alpha(\lambda)}\phi^\star(u)\,\mr du=\lambda$. Note that, without loss of generality, we assume that $\phi^\star(0^+)=+\infty$ (the case $\phi^\star(0^+)<+\infty$ can be handled similarly). As a consequence $\left\{t\in(0,k]: \frac{1}{t}\int_0^t\phi^\star(u)\,\mr du \geq \lambda\right\} = (0,\alpha(\lambda)]$, thus $$\left|\left\{t\in(0,k]: \frac{1}{t}\int_0^k\phi^\star(u)\,\mr du \geq \lambda\right\}\right| = \alpha(\lambda).$$
So the second integral in \eqref{eq:3p6} equals
\[
\int_{\lambda=f_k}^{+\infty}p\lambda^{p-1}\alpha(\lambda)\,\mr d\lambda = \int_{\lambda=f_k}^{+\infty}p\lambda^{p-1}\frac{1}{\lambda}\biggl(\int_0^{\alpha(\lambda)}\phi^\star(u)\biggr)\mr d\lambda,
\]
by the definition of $\alpha(\lambda)$.
The last now integral, equals
\begin{equation} \label{eq:3p7}
\int_{\lambda=f_k}^{+\infty} p\lambda^{p-2} \biggl( \int_{\left\{t\in(0,k]: \frac{1}{t}\int_0^t\phi^\star\geq \lambda\right\}}\! \phi^\star(u)\,\mr du \biggr)\mr d\lambda =
\int_{t=0}^k \frac{p}{p-1}\phi^\star(t) \bigl[\lambda^{p-1}\bigr]_{f_k}^{\frac{1}{t}\int_0^t\phi^\star}\!\mr dt,
\end{equation}
by a use of Fubini's theorem.
As a consequence \eqref{eq:3p6} gives
\begin{multline} \label{eq:3p8}
\int_0^k \left(\frac{1}{t}\int_0^t\phi^\star(u)\,\mr du\right)^p\mr dt = -\frac{1}{p-1}\frac{1}{k^{p-1}}\left(\int_0^k\phi^\star\right)^p + \\
\frac{p}{p-1}\int_0^k\phi^\star(t)\left(\frac{1}{t}\int_0^t\phi^\star\right)^{p-1}\mr dt.
\end{multline}
Then by H\"{o}lder's inequality, applied in the second integral on the right side of \eqref{eq:3p8}, we have that
\begin{multline} \label{eq:3p9}
\int_0^k\left(\frac{1}{t}\int_0^t\phi^\star\right)^p\mr dt \leq
-\frac{1}{p-1}\frac{1}{k^{p-1}}\left(\int_0^k\phi^\star\right)^p + \\
\frac{p}{p-1}\left(\int_0^k[\phi^\star]^p\right)^\frac{1}{p} \left[\int_0^k\left(\frac{1}{t}\int_0^t\phi^\star\right)^p\mr dt\right]^\frac{(p-1)}{p}.
\end{multline}
We set now
\[
J(k) = \int_0^k \left(\frac{1}{t}\int_0^t\phi^\star\right)^p\mr dt,\quad
A(k) = \int_0^k[\phi^\star]^p\quad \text{and}\quad B(k) = \int_0^k\phi^\star.
\]
Then we conclude by \eqref{eq:3p9} that
\begin{align} \label{eq:3p10}
J(k) &\leq -\frac{1}{p-1}\frac{1}{k^{p-1}}[B(k)]^p + \frac{p}{p-1}[A(k)]^\frac{1}{p}[J(k)]^\frac{(p-1)}{p} \Rightarrow \notag \\
\frac{J(k)}{A(k)} &\leq -\frac{1}{p-1}\left(\frac{[B(k)]^p}{k^{p-1}A(k)}\right) + \frac{p}{p-1}\left[\frac{J(k)}{A(k)}\right]^\frac{(p-1)}{p}.
\end{align}
We set now in \eqref{eq:3p10} $\Lambda(k) = \left[\frac{J(k)}{A(k)}\right]^\frac{1}{p}$, thus we get
\begin{align} \label{eq:3p11}
& \Lambda(k)^p \leq -\frac{1}{p-1}\left(\frac{[B(k)]^p}{k^{p-1} [A(k)]}\right) + \frac{p}{p-1}\Lambda(k)^{p-1} \Rightarrow \notag \\
& p[\Lambda(k)]^{p-1} - (p-1)[\Lambda(k)]^p \geq \frac{\bigl(\int_0^k\phi^\star\bigr)^p}{k^{p-1}\int_0^k[\phi^\star]^p} \Rightarrow \notag \\
& H_p(\Lambda(k)) \geq \frac{\bigl(\int_0^k\phi^\star\bigr)^p}{k^{p-1}\int_0^k[\phi^\star]^p} \implies
\Lambda(k) \leq \omega_p\!\left(\frac{\bigl(\int_0^k\phi^\star\bigr)^p}{k^{p-1}\int_0^k[\phi^\star]^p}\right) \Rightarrow \notag \\
& J(k) \leq \int_0^k[\phi^\star]^p\, \omega_p\!\left(\frac{\bigl(\int_0^k\phi^\star\bigr)^p}{k^{p-1}\int_0^k[\phi^\star]^p}\right)^p.
\end{align}
At last by \eqref{eq:3p4}, \eqref{eq:3p5} and \eqref{eq:3p11} we derive the proof of our Lemma.
\end{proof}

We fix now $k\in (0,1]$, and $K\subseteq X$ measurable such that $\mu(K)=k$. Then if $A=A(k)$, $B=B(k)$ are defined as in the proof of Lemma \ref{lem:3p2} we conclude that
\begin{equation} \label{eq:3p12}
\int_K (M_{\mc T}\phi)^p\, d\mu \leq A\,\omega_p\!\left(\frac{B^p}{k^{p-1}A}\right).
\end{equation}
Note now that the $A$, $B$ must satisfy the following conditions
\begin{enumerate}[i)]
\item $B^p \leq k^{p-1}A$, because of H\"{o}lder's inequality for $\phi^\star$ on the interval $(0,k]$.
\item $A\leq F$ and $B\leq f$,
\item $(f-B)^p \leq (1-k)^{p-1}(F-A)$, because of H\"{o}lder's inequality for $\phi^\star$ on the interval $[k,1]$.
\end{enumerate}

\noindent From all the above we conclude the following
\begin{corollary} \label{cor:3p1}
~ \vspace{-6pt}
\[
B_p^\mc T(f,F,k) \leq \sup\left\{ A\,\omega_p\!\left(\frac{B^p}{k^{p-1}A}\right)^p: A,\, B\ \text{satisfy \rnum 1), \rnum 2) and \rnum 3) above} \right\}.
\]
\end{corollary}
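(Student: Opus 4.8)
The plan is to obtain Corollary \ref{cor:3p1} directly from the pointwise estimate \eqref{eq:3p19} (equivalently, Lemma \ref{lem:3p2}) together with the three admissibility constraints i), ii), iii) that have just been recorded. The only real task is to pass from an inequality valid for a single fixed competitor $(\phi,K)$ to a bound on the supremum that defines $B_p^{\mc T}(f,F,k)$ in \eqref{eq:1p8}.

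First I would fix an arbitrary competitor: a nonnegative $\phi$ with $\int_X\phi\,\mr d\mu=f$ and $\int_X\phi^p\,\mr d\mu=F$, together with a measurable $K$ satisfying $\mu(K)=k$. Writing $A=A(k)=\int_0^k[\phi^\star]^p$ and $B=B(k)=\int_0^k\phi^\star$ exactly as in the proof of Lemma \ref{lem:3p2}, inequality \eqref{eq:3p19} yields $\int_K(\mt\phi)^p\,\mr d\mu\leq A\,\omega_p\!\left(\frac{B^p}{k^{p-1}A}\right)^p$. The next step is to verify that this particular pair $(A,B)$ lies in the feasible region of the supremum on the right-hand side. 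Constraint i) is H\"older's inequality on $(0,k]$, namely $\int_0^k\phi^\star\leq\bigl(\int_0^k[\phi^\star]^p\bigr)^{1/p}k^{(p-1)/p}$, which after raising to the $p$-th power is $B^p\leq k^{p-1}A$. Constraint ii) follows from monotonicity of the integral and equimeasurability of $\phi^\star$ with $\phi$, since $A=\int_0^k[\phi^\star]^p\leq\int_0^1[\phi^\star]^p=F$ and $B=\int_0^k\phi^\star\leq\int_0^1\phi^\star=f$. Constraint iii) is H\"older's inequality on the complementary interval $[k,1]$ (of length $1-k$) applied to $f-B=\int_k^1\phi^\star$ and $F-A=\int_k^1[\phi^\star]^p$, giving $(f-B)^p\leq(1-k)^{p-1}(F-A)$.

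Having checked i), ii), iii), the quantity $A\,\omega_p\!\left(\frac{B^p}{k^{p-1}A}\right)^p$ is one of the terms over which the supremum on the right is taken, hence it does not exceed that supremum. Chaining this with the displayed bound from \eqref{eq:3p19} shows that every admissible value $\int_K(\mt\phi)^p\,\mr d\mu$ is dominated by the right-hand supremum; taking the supremum over all admissible $(\phi,K)$ then gives the claimed inequality. I expect no genuine obstacle at this stage, as the substantive analysis has already been carried out in Lemma \ref{lem:3p2}; the only care needed is to confirm that the three constraints are precisely the two H\"older inequalities on the complementary intervals $(0,k]$ and $[k,1]$ together with the global bounds $A\leq F$ and $B\leq f$.
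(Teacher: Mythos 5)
Your proposal is correct and is exactly the argument the paper intends: the paper records \eqref{eq:3p19} together with the constraints i)--iii) immediately before the corollary and then labels the proof ``Immediate,'' which unpacks to precisely your chain --- verify that $(A(k),B(k))$ is feasible via H\"older on $(0,k]$ and $[k,1]$ plus equimeasurability, bound $\int_K(\mt\phi)^p\,\mr d\mu$ by \eqref{eq:3p19}, and take the supremum over admissible $(\phi,K)$. No difference in route; you have simply written out the details the author left tacit.
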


\noindent For the next Lemma we fix $0<k<1$ and we consider the function $h_k(B)$, defined by \eqref{eq:2p2}, for $0\leq B\leq f$. Now by the fact that  $\omega_p$ is decreasing on $[0,1]$ and the condition \rnum 3) for $A$, $B$ we immediately conclude the following
\begin{corollary} \label{cor:3p2}
~ \vspace{-6pt}
\begin{multline} \label{eq:3p13}
B_p^\mc T(f,F,k) \leq \sup\left\{ \left(F-\frac{(f-B)^p}{(1-k)^{p-1}}\right)\,\omega_p\!\Biggl(\frac{B^p}{k^{p-1}\left(F-\frac{(f-B)^p}{(1-k)^{p-1}}\right)}\Biggr)^p : \right. \\
\left.
\vphantom{\Biggl(\frac{B^p}{k^{p-1}\left(F-\frac{(f-B)^p}{(1-k)^{p-1}}\right)}\Biggr)^p}
\text{for all}\ B\in [0,f]\ \text{such that}\ h_k(B)\leq F\right\}.
\end{multline}
\end{corollary}

We now prove that we have equality in (3.13). Fix $k\in(0,1]$ and a $B$ which satisfy the conditions stated in Corollary \ref{cor:3p2}. We set $A = F-\frac{(f-B)^p}{(1-k)^{p-1}}$ and we fix also a $\delta\in (0,1)$. \\

We use now Lemma \ref{lem:2p1} to pick a family $\{I_1, I_2, \ldots\}$ of pairwise disjoint elements of $\mc T$ such that $\sum_j \mu(I_j)=k$ and since $\frac{B^p}{k^{p-1}} \leq A$, using the value of $B_p^{\mc T}(f,F,f,1)$ which is evaluated in \cite{14}, for each $j$ we choose a non-negative $\phi_j\in L^p\left(I_j, \frac{1}{\mu(I_j)}\mu\right)$ such that
\begin{gather} \label{eq:3p14}
\int_{I_j} \phi^p\,\mr d\mu = \frac{A}{k}\mu(I_j),\qquad \int_{I_j}\phi\,\mr d\mu = \frac{B}{k}\mu(I_j), \\
\text{and}\ \int_{I_j} \left(\mc M_{\mc T(I_j)}(\phi_j)\right)^p\mr d\mu \; \geq\; \delta\,\frac{A}{k}\,\omega_p\!\left(\frac{B^p}{k^{p-1}A}\right)^p \mu(I_j), \label{eq:3p15}
\end{gather}
where $\mc T(I_j)$ is the subtree of $\mc T$, defined by
\[
\mc T(I_j) = \{I\in\mc T:\, I\subseteq I_j\}.
\]
Next we choose $\psi\in L^p(X\!\setminus\! K, \mu)$ such that $\int_{X\setminus K}\psi^p\,\mr d\mu = F-A > 0$ and $\int_{X\setminus K} \psi\,\mr d\mu = f-B > 0$ which, because of the value of $A$, must be constant and equal to $\frac{f-B}{1-k} = \Big(\frac{(F-A)}{(1-k)}\Big)^\frac{1}{p}$. Here $K$ stands for $K=\cup I_j\subseteq X$.
Then we define $\phi = \psi \chi_{X\setminus K} + \sum_j \phi_j\chi_{I_j}$, and we obviously have
\begin{equation} \label{eq:3p16}
\int_X\phi^p\,\mr d\mu = F\quad \text{and} \quad \int_X\phi\,\mr d\mu = f.
\end{equation}
Additionally we must have by \eqref{eq:3p15} that
\begin{multline} \label{eq:3p17}
\int_K(M_{\mc T}\phi)^p\mr d\mu \geq \delta\, A\, \omega_p\!\left(\frac{B^p}{k^{p-1}A}\right)^p = \\
\delta \left(F-\frac{(f-B)^p}{(1-k)^{p-1}}\right)\omega_p\!\!\left(\frac{B^p}{k^{p-1}\left(F-\frac{(f-B)^p}{(1-k)^{p-1}}\right)}\right)^p.
\end{multline}
Letting $\delta\to1^-$ we obtain equality in \ref{eq:3p13}, thus proving Theorem \ref{thm:1p1}.

\begin{corollary} \label{cor:3p3}
In the statement of Corrolary \ref{cor:3p1} we have equality.
\end{corollary}

\begin{proof}
Immediate, since we have equality on \eqref{eq:3p13}, and the right side of \eqref{eq:3p13} is greater or equal than the right side of the inequality that is stated on  Corrolary \ref{cor:3p1}.
\end{proof}

%

%
%
%
%

\section{Construction of the function $g_k$}  \label{sec:4}

We now proceed to prove Theorem 1.2.

\begin{proof}
As it has been proved in Corrolary 3.2, it is true that:
\begin{equation*}
B_p^\mathcal T(f,F,k) = \sup\left\{\mathcal R_k(B): 0\leq B\leq f,\ \text{and}\ h_k(B)\leq F\right\}
\end{equation*}
where $\mathcal R_k(B)$, $h_k(B)$ are defined as in Section \ref{sec:2}.
By Lemma \ref{lem:2p3}, we see that the value $B_0$ satisfies the following equation
\begin{equation} \label{eq:4p1}
\omega_p(Z_0) = \frac{B_0}{k} \frac{1-k}{f-B_0},
\end{equation}
where $Z_0$ is given by
\begin{equation*}
Z_0 = \frac{B_0^p}{k^{p-1}\left(F-\frac{(f-B_0)^p}{(1-k)^{p-1}}\right)}.
\end{equation*}

We search for a function of the form
\begin{equation}\label{eq:4p2}
g_k(t) = \begin{cases}
A_1\,t^{-1+\frac{1}{a}},& t\in(0,k] \\
c,& t\in(k,1]
\end{cases}
\end{equation}

for some constants $a, c, A_1$, which satisfies the properties
\begin{equation} \label{eq:4p3}
B_p^\mathcal T(f,F,k) = \int_0^k\left(\frac 1 t \int_0^tg_k\right)^p\mathrm dt,
\end{equation}

and

\begin{equation} \label{eq:4p4}
\int_0^1g_k=f,\quad \int_0^1g_k^p=F
\end{equation}

Concerning the first equation in \ref{eq:4p4}, we have
\begin{equation}\label{eq:4p5}
\begin{split}
\int_0^1g_k=f & \Leftrightarrow \int_0^kg_k + \int_k^1g_k = f \Leftrightarrow\\
& \Leftrightarrow \int_0^kg_k+c(1-k)=f.
\end{split}
\end{equation}

We set $c=\frac{f-B_0}{1-k}$, in order to ensure that
\begin{equation} \label{eq:4p6}
\int_0^kg_k = B_0.
\end{equation}

Note that \eqref{eq:4p6} is (in view of \eqref{eq:4p2}) equivalent to
\begin{equation} \label{eq:4p7}
\int_0^kA_1\,t^{-1+\frac 1 a}\mathrm dt = B_0 \Leftrightarrow A_1 = \frac{B_0k^{-1/a}}{a},
\end{equation}
so that we found $A_1$, in terms of $a$. We search now for a value of $a$ such that the second equation in \eqref{eq:4p4} is true. Thus we should have
\begin{gather}
A_1^p \int_0^k t^{-p+\frac{p}{a}}\mathrm dt = F - \frac{(f-B_0)^p}{(1-k)^{p-1}} \Leftrightarrow \nonumber \\
\frac{B_0^p\,k^{-p/a}}{a^p}\frac{1}{1+\frac{p}{a}-p}k^{1-p+p/a} = F-\frac{(f-B_0)^p}{(1-k)^{p-1}} \Leftrightarrow \nonumber \\
\frac{B_0^p}{k^{p-1}}\frac{1}{p\,a^{p-1}-(p{-}1)a^p} = F - \frac{(f-B_0)^p}{(1-k)^{p-1}} \nonumber \Leftrightarrow \\
\frac{B_0^p}{k^{p-1}H_p(a)} = F-\frac{(f-B_0)^p}{(1-k)^{p-1}} \Leftrightarrow H_p(a) = \frac{B_0^p}{k^{p-1}\left(F-\frac{(f-B_0)^p}{(1-k)^{p-1}}\right)} \nonumber \Leftrightarrow \\
H_p(a) = Z_0 \Leftrightarrow a = \omega_p(Z_0) \in \left[1,\frac{p}{p-1}\right] \label{eq:4p8}
\end{gather}
\end{proof}

As a consequence, if $A_1, a$ are given by \eqref{eq:4p7} and \eqref{eq:4p8} respectively, equations \eqref{eq:4p4} are true.
Note now that for every $t\in(0,k]$ we have that
\[
\frac 1 t \int_0^t g_k = a\,g_k(t),\quad \forall t\in(0,k].
\]
Thus
\begin{multline} \label{eq:4p9}
\int_0^k \left(\frac 1 t \int_0^t g_k\right)^p\mathrm dt =a^p \int_0^k g_k^p =\\
= \left(F-\frac{(f-B_0)^p}{(1-k)^{p-1}}\right)\omega_p\!\left(\frac{B_0^p}{k^{p-1}\left(F-\frac{(f-B_0)^p}{(1-k)^{p-1}}\right)}\right)^p.
\end{multline}
By Theorem \ref{thm:1p1} and Lemma 2.3, the right side of \eqref{eq:4p9} equals $B_p^\mathcal T(f,F,k)$. We need only to prove that $g_k$ is continuous on $t_0=k$. It is enough to show that
\begin{multline} \label{eq:4p10}
\frac{f-B_0}{1-k} = A_1k^{-1+\frac 1 a} \Leftrightarrow A_1k^{-1+\frac 1 a} = \left(\frac{B_0}{k} \frac{1-k}{f-B_0}\right)^{-1}\frac{B_0}{k}
\end{multline}

By \eqref{eq:4p1} and \eqref{eq:4p8}, $a=\omega_p(Z_0) = \frac{B_0}{k}\frac{1-k}{f-B_0}$. Thus \eqref{eq:4p10} is equivalent to
$A_1k^{-1+\frac 1 a} = a^{-1}\frac{B_0}{k}$, which is just \eqref{eq:4p7}. Theorem 1.2 is now proved.

\section{The Bellman function of four variables of $\mc M_{\mc T}$ } \label{sec:5}

In this section we evaluate the more general Bellman function of the dyadic maximal operator \eqref{eq:1p6}. More precisely we prove the following

\begin{theorem}\label{thm:5p1}
The following identity holds
\begin{equation} \label{eq:5p1} 
B_p^{\mc T}(f,F,L,k) = \sup \left\{B_p^{\mc T}(f,F,k_1)+L^p(k-k_1):k_1\in(0,k]\right\},
\end{equation}
\begin{proof}
Let $k\in(0,1]$ and $L\geq f$ and consider also $\phi$ as in  \eqref{eq:1p6}. We choose $w\geq L$ such that
$\mu(U_1)\leq k\leq \mu(U_2)$
where $U_1=\left\{max(\mc M_{\mc T}\phi,L)> w\right\}$ and
$U_2=\left\{max(\mc M_{\mc T}\phi,L)\geq w\right\}$. We next consider a measurable set $S$ satisfying $U_1\subseteq S \subseteq U_2$ and
$\mu(S)=k$.

Let now $K$ be an arbitrary measurable subset of $X$ with $\mu(K)=k$. 

We mention the following which is a well known fact from measure theory:

Let $g:(X,\mu)\rightarrow \mathbb{R^+}$ be an integrable function, $w$ non-negative and $D$ be a measurable set such that $\left\{g>w\right\}\subseteq D\subseteq \left\{g \geq w\right\}$. Then for every $K\subseteq X$, with $\mu(K)=\mu(D)$ the inequality
$\int_Kg \mr \; d\mu\leq \int_Dg \mr \; d\mu$ is true.

By using the above fact we conclude that 
\begin{equation} \label{eq:5p2}
\int_K max(\mc M_{\mc T}\phi,L)^p \mr \; d\mu\leq \int_S max(\mc M_{\mc T}\phi,L)^p\mr \; d\mu
\end{equation}

We consider two cases

a) If $w>L$ then $\mc M_{\mc T}\phi>L$ on $S$. Thus \eqref{eq:5p2} gives
\begin{equation} \label{eq:5p3}
\int_K max(\mc M_{\mc T}\phi,L)^p \mr \; d\mu\leq \int_S(\mc M_{\mc T}\phi)^p\mr \; d\mu \leq B_p^{\mc T}(f,F,k)
\end{equation}

b) If $w=L$ then  $\mc M_{\mc T}\phi \leq L$ on $S-U_1$, so that \eqref{eq:5p2} gives
\begin{equation} \label{eq:5p4}
\int_K max(\mc M_{\mc T}\phi,L)^p \mr \; d\mu \leq \int_{U_{1}}(\mc M_{\mc T}\phi)^p\mr \; d\mu +(k-k_1)L^p
\end{equation} 
where we have set $k_1=\mu(U_1)\leq k$. Thus we have proved that
$$B_p^{\mc T}(f,F,L,k) \leq \sup \left\{B_p^{\mc T}(f,F,k_1)+L^p(k-k_1):k_1\in(0,k]\right\}$$

We proceed to the reverse inequality:

Let $k_1\in (0,k]$ and for any $\delta \in (0,1)$ we choose a function   $\phi_{\delta}$ satisfying the conditions in \eqref{eq:1p6}, and a measurable subset $K_{\delta}$ of $X$ with $\mu(K_{\delta})=k_1$ for which 
\begin{equation} \label{eq:5p5}
\int_{K_{\delta}}(\mc M_{\mc T}\phi)^p \mr \; d\mu \geq \delta B_p^{\mc T}(f,F,k_1).  
\end{equation}
We now choose a measurable subset $E$ of $X-K_{\delta}$, such that
$\mu(E)=k-k_1$. We the define $K=K_{\delta} \cup E$. Then $\mu(K)=k$ and
\begin{equation} \label{eq:5p6}
\int_K max(\mc M_{\mc T}\phi,L)^p \mr \; d\mu \geq \int_{K_{\delta}}(\mc M_{\mc T}\phi)^p\mr \; d\mu +(k-k_1)L^p
\end{equation}
so by using \eqref{eq:5p5} and letting also $\delta\rightarrow 1^-$ we conclude the opposite inequality, and thus the proof of Theorem 5.1. 
\end{proof}
\end{theorem}

\noindent Nikolidakis Eleftherios, Assistant Professor, Department of Mathematics, Panepistimioupolis, University of Ioannina, 45110, Greece.
E-mail address: enikolid@uoi.gr


\begin{thebibliography}{99}
\bibitem {1}
    Bekjan, Turdebek N.; Chen, Zeqian; Osekowski, Adam.
    \emph{Noncommutative maximal inequalities associated with convex functions.}
    Trans. Am. Math. Soc. 369 (2017), no. 1, 409--427.

\bibitem {2}
    D. L. Burkholder.
    \emph{Martingales and Fourier analysis in Banach spaces,}
    Probability and analysis (Varenna 1985), 61--108, Lecture Notes in Math., 1206, Springer, Berlin, 1986.

\bibitem {3}
    D. L. Burkholder.
    \emph{Explorations in martingale theory and its applications,}
    {\'E}cole d'{\'E}t{\'e} de Probabilit{\'e}s de Saint-Flour XIX—1989, Springer, Berlin, Heidelberg, 1991. 1-66.

\bibitem{4}
	L. Colzani, J. Perez Lazaro,
	\emph{Eigenfunctions of the Hardy-Littlewood maximal operator},
	Colloq. Math. 118 (2010), no. 2,
	379--389. 	
	
\bibitem{5}
	L. Grafakos, Stephen J. Montgomery-Smith,
	\emph{Best constants for uncentered maximal functions},
	Bull. London Math. Soc. (1997), no. 1,
	60--64.

\bibitem {6}
    Ivanisvili, Paata; Osipov, Nikolay N.; Stolyarov, Dmitriy M.; Vasyunin, Vasily I.; Zatitskiy, Pavel B..
    \emph{Bellman function for extremal problems in BMO,}
    Trans. Am. Math. Soc. 368, (2016), no. 5, 3415--3468.

\bibitem {7}
    Logunov, Alexander A.; Slavin, L.; Stolyarov, D.M.; Vasyunin,
    V.; Zatitskiy, P.B. \emph{Weak integral conditions for BMO},
    Proc. Am. Math. Soc. 143 (2015), no. 7, 2913--2926.

\bibitem {8}A. D. Melas.
    \emph{Sharp general local estimates for dyadic-like maximal operators and related Bellman functions,}
    Adv. in Math. 220 (2009), no. 2, 367--426

\bibitem {9}A. D. Melas.
    \emph{Dyadic-like maximal operators on $L\log L$ functions,}
    J.  Funct. Anal. 257, no. 6, (2009), 1631--1654.
	
\bibitem{10}
	A. D. Melas,
	\emph{The Bellman functions of dyadic-like maximal operators and related inequalities},
	Adv. in Math. 192 (2005), no. 2,
	310--340.
	
\bibitem{11}
	A. D. Melas, E. N. Nikolidakis,
	\emph{Dyadic-like maximal operators on integrable functions and Bellman functions related to Kolmogorov's inequality},
	Trans. Amer. Math. Soc. 362 (2010), no. 3,
	1571--1597.

\bibitem {12}
    F. Nazarov, S. Treil.
    \emph{The hunt for a Bellman function: applications to estimates for singular integral operators and to other classical problems of harmonic analysis,}
    Algebra i Analyz 8 (1996), no. 5,  32--162; translation in St. Petersburg Math. J., 8 (1997), no. 5, 721--824.

\bibitem {13}
    F. Nazarov, S. Treil, A. Volberg.
    \emph{The Bellman functions and two-weight inequalities for Haar multipliers,}
    J. Amer. Math. Soc. 12 (1999), no. 4, 909--928.
	
\bibitem{14}
	E. N. Nikolidakis, A. D. Melas,
          \emph{A sharp integral rearrangement inequality for the dyadic maximal operator and applications},
	Appl. Comput. Harmon. Anal., 38 (2015), no. 2, 242--261.

\bibitem {15}
      G. Wang,
      \emph{ Sharp maximal inequalities for conditionally symmetric martingales and Brownian motion,}
       Proceedings of the American Mathematical Society 112 (1991): 579-586.
	
\end{thebibliography}
\end{document}